\newtheorem{teo}{Theorem}
\newtheorem{lem}{Lemma}
\newtheorem{df}{Definition}
\newcommand\testshape{family=\f@family; series=\f@series; shape=\f@shape.}
\def\myemphInternal#1{\if n\f@shape%
\begingroup\itshape #1\endgroup\/%
\else\begingroup\bfseries #1\endgroup%
\fi}
\def\myemph{\futurelet\testchar\MaybeOptArgmyemph}
\def\MaybeOptArgmyemph{\ifx[\testchar \let\next\OptArgmyemph
                 \else \let\next\NoOptArgmyemph \fi \next}
\def\OptArgmyemph[#1]#2{\index{#1}\myemphInternal{#2}}
\def\NoOptArgmyemph#1{\myemphInternal{#1}}
\newcommand{\rr}{\mathbb{R}}
\begin{document}

\title[Topological equivalence to a projection]
	{Topological equivalence to a projection}

\author{\fbox{V. V. Sharko}}
\address{Institute of Mathematics  National Academy of Sciences of Ukraine, }
\email{sharko@imath.kiev.ua}

\author{Yu. Yu. Soroka}
\address{Taras Shevchenko National University of Kiev}
\email{soroka.yulya@ukr.net}
\subjclass[2000]{Primary ; Secondary }
\date{05/11/2014}
\keywords{topological equivalence, a linear function}

\begin{abstract}
We present a necessary and sufficient condition for the topological equivalence of a continuous function on a plane to a projection onto one of coordinates.
\end{abstract}

\maketitle

Let $M$ be a connected surface, i.e. $2$-dimensional manifold.
Two continuous functions $f, g:M \to \rr$ are called \myemph{topologically equivalent}, if there exist two homeomorphisms $h:M \to M$ and $k:\rr \to \rr$ such that $k \circ f=g \circ h$.

Classification of continuous functions $f:M \to \rr$ on surfaces up to topological equivalence was initiated in the works by M.~Morse~\cite{Morse:DMJ:1946}, \cite{Morse}, see also \cite{JenkinsMorse:AJM:1952, JenkinsMorse:AMS:1953, JenkinsMorse:UMP:1955}.
In recent years essential progress in the classification of such functions was made in~\cite{Oshemkov, Sharko2, Sharko3, Arnold, PolulyakhYurchuk}.


%
%
%
%
%

Let $f:\rr^2 \to \rr$ be a continuous function.
Assuming that $f$ ``has not critical points'' we present a necessary and sufficient condition for $f$ to be topologically equivalent to a linear function.
First we recall some definitions from W.~Kaplan~\cite{Kaplan}. 

\begin{df}{\rm\cite{Kaplan}}.
A \myemph{curve} in $\rr^2$ is a homeomorphic image of the open interval $(0,1)$.
Let $U\subset\rr^2$ be an open subset.
A \myemph{family of curves} in $U$ is a partition of $U$ whose elements are curves.

A family of curves $\Im$ in $U$ is called \myemph{regular} at a point $p\in\rr^2$, if there exists an open neighbourhood $U_p$ of $p$ and a homeomorphism $\varphi:(0,1)\times (0,1) \to U_p$ such that for every $y\in(0,1)$ the image $\varphi((0,1)\times y)$ is an intersection of $U_p$ with some curve from the family $\Im$.
Such a neighbourhood $U_p$ is called \myemph{$r$-neighbourhood} of $p$.
\end{df}

Thus the curves of regular family are ``locally parallel'', however they global behaviour can be more complicated, see Figure~\ref{ris:pic}.
We will consider continuous functions $f:\rr^2\to\rr$ whose level-sets are ``globally parallel''.

One of the basic examples is a projection $g:\rr^2\to\rr$ given by $g(x,y)=y$.
Its level sets are parallel lines $y=\mathrm{const}$, and in particular they constitute a regular family of curves.

On the other hand, consider the function $f(x,y)=\arctan(y-\mathrm{tg}^{2}(x))$, see Figure~\ref{ris:pic}.
It level sets are not connected, however, the partition into \myemph{connected components} of level sets of $f$ is also a regular family of curves.
\begin{figure}[ht]
\center{\includegraphics[height = 2.5cm]{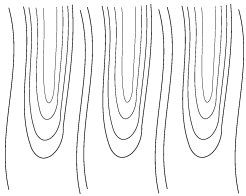}}
\caption{Level lines of $f(x,y)=\arctan( y-\mathrm{tg}^{2}(x))$}
\label{ris:pic}
\end{figure}

The following theorem shows that connectedness of level sets is a characteristic property of a projection.
%
%
\begin{teo}\label{th:main_theorem}
Let $f:\rr^2\to\rr$ be a continuous function and $\Im = \{ f^{-1}(a)\mid a\in\rr^2\}$ be the partition of $\rr$ by level sets of $f$.
Suppose the following two conditions hold.
\begin{enumerate}
\item 
For each $p\in f(\rr^2)$ belonging to the image of $f$, the corresponding level set $f^{-1}(a)$ is a \myemph{curve}.
In particular, it is path connected.

\item The family of curves $\Im$ is regular.
\end{enumerate}
Then the image $f\,(\rr^{2})$ is an open interval $(a, b)$, $a,b \in \rr\cup{\pm\infty}$, and there is a homeomorphism $\varphi: \rr \times (a,b) \to \rr^2$ such that $f \circ \varphi(x,y) = y$.
In other words, $f$ is topologically equivalent to a projection.
\end{teo}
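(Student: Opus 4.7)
The plan is to first identify the leaf space of $\Im$ with the target interval, then build a global trivialization $\rr^2 \cong \rr\times(a,b)$ of $f$ out of the local trivializations provided by the $r$-neighborhoods.

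I would start by showing that $f(\rr^2)$ is an open interval. Connectedness of $\rr^2$ makes $f(\rr^2)$ an interval, and openness follows from the regular family structure: in an $r$-neighborhood $U_p=\varphi((0,1)^2)$ of any point $p$, distinct horizontal slices $\varphi((0,1)\times\{y\})$ belong to distinct elements of $\Im$, hence to distinct level sets of $f$, so $y\mapsto f(\varphi(\tfrac12,y))$ is a continuous injection $(0,1)\to\rr$, necessarily strictly monotone, placing $f(p)$ in the interior of $f(U_p)$. Hence $f(\rr^2)=(a,b)$. Next, I would analyze the quotient $Q=\rr^2/\Im$, with projection $\pi:\rr^2\to Q$; the function $f$ descends to a continuous injection $\bar f:Q\to(a,b)$. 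Using the previous openness argument, each $\pi(U_p)$ is open in $Q$ and $\bar f$ carries it homeomorphically onto an open subinterval of $(a,b)$; since $f$ separates distinct leaves, $Q$ is Hausdorff, and separability inherited from $\rr^2$ makes the resulting connected $1$-manifold second countable. Hence $\bar f$ is a continuous injective local homeomorphism from a connected Hausdorff $1$-manifold into $\rr$, and therefore a homeomorphism $Q\to(a,b)$.

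With the leaf space identified, I would construct a global transversal $T\subset\rr^2$ meeting each leaf in exactly one point, with $f|_T:T\to(a,b)$ a homeomorphism. The idea is to exhaust $(a,b)$ by compact subintervals $[a_n,b_n]$ and, using compactness together with a finite subcover by $r$-neighborhoods, inductively extend transversals $T_n$ over $[a_n,b_n]$ to $T_{n+1}$ over $[a_{n+1},b_{n+1}]$; the desired $T$ is $\bigcup_n T_n$. Finally, I would build $\Phi:\rr\times(a,b)\to\rr^2$ leaf by leaf: for each $y\in(a,b)$, parametrize $L_y$ by a homeomorphism $\gamma_y:\rr\to L_y$ based at $T\cap L_y$, with a continuous choice of orientation along $T$ coming from the local $r$-charts. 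Setting $\Phi(x,y):=\gamma_y(x)$, continuity of $\Phi$ and of its inverse is then checked locally inside the $r$-neighborhood coordinates.

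The main obstacle is the last step. Although each $r$-neighborhood is a genuine product chart, every leaf is homeomorphic to $\rr$ and must be covered by infinitely many such charts, so one has to glue the local coordinates along a leaf into a single global $x$-coordinate that varies continuously with $y$. Connectedness of each level set (so that each leaf is one curve rather than a disjoint union, as in the $\arctan(y-\tan^2 x)$ example) is what makes this gluing unambiguous, and simple connectedness of $\rr^2$ eliminates any monodromy in the orientation choice; I expect the bulk of the verification to go into promoting the resulting global coordinate from a continuous bijection to a genuine homeomorphism.
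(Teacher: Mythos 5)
Your first two steps are sound and even streamline part of the paper's argument: inside an $r$-neighbourhood each horizontal slice is the full intersection of the chart with a single level set, so $f$ is injective, hence strictly monotone, along a vertical arc; this gives openness of $f(\rr^2)$, and the saturation argument then identifies the leaf space via $\bar f\colon Q\to(a,b)$. The genuine gap is your final step, which is where the whole difficulty of the theorem sits. A basepoint $T\cap L_y$ together with an orientation does \emph{not} determine the parametrization $\gamma_y\colon\rr\to L_y$; there are uncountably many such homeomorphisms, and unless the $\gamma_y$ for nearby $y$ are synchronized along the entire noncompact leaf, $\Phi(x,y)=\gamma_y(x)$ need not be continuous, nor need its inverse be. Moreover this coherence cannot be ``checked locally inside the $r$-neighbourhood coordinates'': an $r$-chart controls only a bounded piece of the leaves passing through it, while continuity of $\Phi$ at points $(x,y)$ with $|x|$ large concerns how nearby leaves behave arbitrarily far from the transversal, after passing through infinitely many charts whose transverse widths may shrink. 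Simple connectedness of $\rr^2$ kills the orientation monodromy, but it does not produce this uniform synchronization; the missing statement is exactly the nontrivial ``strip theorem'': for two curves $K,L$ of $\Im$ joined by a cross-section, the union of $K$, $L$ and the curves separating them is homeomorphic to $\rr\times[0,1]$ with the curves as horizontal lines.

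Note that the paper does not prove this from scratch either: it imports it from Kaplan (Theorem~\ref{th:Kaplan}), proves the elementary lemma that $f$ is strictly monotone on every cross-section (connectedness of level sets plus Jordan separation, close in spirit to your openness argument), concludes that each strip is exactly $f^{-1}[c_k,c_{k+1}]$ with $f$ becoming the second coordinate after a monotone reparametrization, and then glues countably many such strips over a sequence $c_k$ exhausting $(a,b)$. So your plan, as written, silently assumes the same Kaplan-type product structure; to be self-contained you would have to prove it (equivalently, that $\pi\colon\rr^2\to Q\cong(a,b)$ is a trivial fibration), for instance by chaining finitely many $r$-charts along compact arcs of a leaf and controlling the limit --- and that is the bulk of Kaplan's paper, not a local verification. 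Your transversal-extension step has a smaller, related issue: when two local sections over overlapping subintervals disagree, joining them requires sliding one along the leaves, which again uses a product structure over a compact leaf arc; this is fixable by chart-chaining, but it needs to be said.
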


The proof is based on the results of~\cite{Kaplan}.

Let $\Im$ be a regular family of curves in $\rr^2$.
Then by~\cite[Theorem 16]{Kaplan}, each curve $C$ of $\Im$ is a proper embedding of $\rr$, so it has an infinity as a sole limit point.
It follows from Jordan's theorem (applied to the sphere $S^2=\rr^2\cup\infty$) that each curve $C$ of $\Im$ divides the plane into two distinct regions, having $C$ as the common boundary.
This property allows to define the following relation for curves on $\Im$.
\begin{df}
Three distinct curves $K,\, L, \, C$ from $\Im$ are \myemph{in the relation $K|C|L$}, if $K$ and $L$ belong to distinct components of $\rr^2\setminus C$.
\end{df}

%
%
%
%

For an open subset $U\subset\rr^2$ let $\Im_{U}$ be the partition of $U$ by connected components of intersections of $U$ with curves from $\Im$.
Then $\Im_{U}$ is a family of curves in $U$.
Notice also that an intersection of $U$ with some curve from $\Im$ may have even countable many connected components.
\begin{df}
Let $p,q\in\rr^2$.
An \myemph{arc} $[p,q]$, i.e. homeomorphic image of $[0,1]$, connecting these points, will be called a \myemph{cross-section} relative to   $\Im$ if there exists an open set $U$ in $\rr^2$ containing $[p,q]$ and such that each curve of $\Im_{U}$ meets $[p,q]$ in $U$ at most once.
\end{df}

Evidently, for every $p\in\rr^2$, there is an arbitrary small $r$-neighbourhood $V$ of $p$ and a cross-section $[q,s] \subset V$ relative $\Im$ passing through $p$.

\begin{teo}\label{th:Kaplan}{\rm\cite{Kaplan}}
Let $K,L$ be two distinct curves from a regular family $\Im$.
Suppose two points $p\in L$ and $q\in K$ can be connected by a cross-section $[p,q]$ and let $S$ be the set of curves crossing $[p,q]$ except for $p$ and $q$.
Then $S$ form an open point set and the condition $K|C|L$ is equivalent to the condition that $C$ is contained in $S$.

Moreover, there is a homeomorphism $\varphi:\rr\times[0,1] \to K \cup S \cup L$ such that $K = \phi(\rr\times0)$, $L = \phi(\rr\times 1)$, and $\phi(\rr\times t)$ is a curve belonging to $\Im$ for all $t\in(0,1)$.
\end{teo}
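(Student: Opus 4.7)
The plan is to address the three assertions of the theorem in sequence: openness of $S$ (viewed as the union of its curves), the equivalence $K|C|L \iff C\subset S$, and then the global product structure on $K\cup S\cup L$. For openness, fix $x$ on a curve $C\in S$ and let $r$ be the interior point where $C$ meets the cross-section $[p,q]$. The subarc of $C$ from $x$ to $r$ is compact, so by regularity it is covered by finitely many $r$-neighbourhoods in which $\Im$ is locally trivialized as parallel segments. Chaining these local product charts along the subarc, any curve passing sufficiently close to $C$ near $x$ stays close to $C$ near $r$, and hence meets $[p,q]$ inside the open set $U$ supplied by the cross-section definition; since $U$ admits at most one crossing per curve, that crossing is interior to $[p,q]$, so every point in a neighbourhood of $x$ lies on a curve in $S$.

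For the equivalence, assume $C\in S$ meets $[p,q]$ at an interior point $r$. Since $C$ is a proper embedding of $\rr$ it separates $\rr^2$ into two regions; as $[p,q]$ meets $C$ only at $r$, the endpoints $p,q$ fall on opposite sides, and by disjointness of the curves of $\Im$ the curves $K\ni q$ and $L\ni p$ lie in distinct components of $\rr^2\setminus C$, proving $K|C|L$. Conversely, if $C\neq K,L$ does not meet $[p,q]$, the whole arc lies on one side of $C$, so $K$ and $L$ are on the same side and the relation fails.

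The construction of $\varphi$ is the main work. Parameterize $[p,q]$ by $\gamma:[0,1]\to\rr^2$ with $\gamma(0)=q\in K$ and $\gamma(1)=p\in L$, and let $C_t\in\Im$ be the curve through $\gamma(t)$; the second coordinate of $\varphi$ will then be $t$. To produce a parameterization of each $C_t$ by $\rr$ that is jointly continuous in $t$, my strategy is to extend $\gamma$ to a family of cross-sections $\gamma_s:[0,1]\to K\cup S\cup L$, $s\in\rr$, with $\gamma_0=\gamma$, each meeting every $C_t$ exactly once, and then to set $\varphi(s,t):=\gamma_s(t)$. For small $|s|$ such a $\gamma_s$ is obtained by covering $\gamma([0,1])$ with finitely many $r$-neighbourhoods, translating $\gamma$ horizontally by $s$ inside each local product chart, and gluing the translations on overlaps where they agree by uniqueness of the local trivialization up to reparameterization of the $\rr$-direction.

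The principal obstacle is extending $\gamma_s$ to all $s\in\rr$ and showing the assignment exhausts each $C_t$. For this I would fix $t$ and prove that the set of parameters $s$ for which $\gamma_s(t)$ is defined is both open and closed in $\rr$: openness follows from the local chart argument above, while closedness uses the properness of the embedding $\rr\to C_t$ provided by Theorem~16 of Kaplan, which prevents $\gamma_s(t)$ from escaping to infinity as $s$ tends to a finite limit. A monotonicity argument --- the fact that $\gamma_s(t)$ moves in one direction along $C_t$ as $s$ increases, forced by the cross-section property --- then shows $s\mapsto\gamma_s(t)$ is a homeomorphism $\rr\to C_t$. Continuity and injectivity of the full map $(s,t)\mapsto\gamma_s(t)$, together with the boundary conditions $\gamma_s(0)\in K$ and $\gamma_s(1)\in L$, deliver the required $\varphi$.
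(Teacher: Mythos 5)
First, a structural remark: the paper itself does not prove this theorem --- it is quoted from Kaplan, and the proof environment that follows the statement is in fact the proof of Theorem~\ref{th:main_theorem}, which uses Theorem~\ref{th:Kaplan} as a black box. So your attempt must be judged on its own. Your part 1 (openness of $S$ via a chain of $r$-neighbourhoods along the compact subarc of $C$ from $x$ to $r$) is sound in outline. Part 2, however, has a genuine gap: you assert that ``$[p,q]$ meets $C$ only at $r$'' as if it followed from the definition of a cross-section. It does not. The definition only guarantees that each element of $\Im_U$ --- i.e.\ each \emph{connected component} of $C\cap U$ --- meets $[p,q]$ at most once, and the paper explicitly warns that $C\cap U$ may have countably many components. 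That a cross-section meets each global curve of $\Im$ at most once is exactly the content of the paper's Lemma~\ref{lm:monot_cross_sect}; note moreover that the paper proves that lemma using the function $f$ and the connectedness of its level sets, tools not available for Theorem~\ref{th:Kaplan} as stated (a bare regular family). You also need, and do not supply, the observation that the arc genuinely passes from one side of $C$ to the other at $r$ rather than touching $C$ and returning to the same side; this follows from the cross-section property applied to the leaves adjacent to $C$ inside the $r$-neighbourhood of $r$, since a tangency would force nearby leaves to be crossed twice.

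Part 3, which you rightly identify as the main work, has two further gaps. (i) ``Translating $\gamma$ horizontally by $s$ inside each local product chart, and gluing the translations on overlaps where they agree'' does not work: the transition map between two overlapping $r$-neighbourhoods preserves the leaves but not the horizontal coordinate, so a horizontal translation by $s$ in one chart is not a horizontal translation by $s$ in the next, and the locally translated arcs will in general not match on overlaps. Building a genuine product neighbourhood of the arc requires an inductive normalization of the charts along $[p,q]$, which is a real argument rather than a gluing triviality. (ii) Even granting a family $\gamma_s$ defined for all $s\in\rr$, your monotonicity argument only shows that $s\mapsto\gamma_s(t)$ is an injective open map into $C_t$, hence a homeomorphism onto an \emph{open subarc}; it does not show that this subarc is all of $C_t$, nor that $\bigcup_s\gamma_s([0,1])$ exhausts $K\cup S\cup L$. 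Similarly, in your open--closed argument, properness of the embedding $\rr\to C_t$ does not by itself prevent $\gamma_s(t)$ from diverging as $s$ approaches a finite limit --- one needs an a priori compactness bound on $\{\gamma_s(t)\}$. This exhaustion is the genuinely hard part of Kaplan's theorem and is left unaddressed.
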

\begin{proof}
First we need the following lemma.
\begin{lem}\label{lm:monot_cross_sect}
Let $[p,q]$ be a cross-section of $\Im$.
Then the restriction of $f$ to $[p,q]$ is strictly monotone.
In particular, $[p,q]$ intersects each curve in $\Im$ in at most one point.
\end{lem}
\begin{proof}
Suppose there exists a point $x\in[p,q]$ distinct from $p$ and $q$ and being a local extreme of $f|_{[p,q]}$.
Let $c=f(x)$.
As mentioned above the embedding $f^{-1}(c) \subset \rr^2$ is proper, therefore 
\begin{enumerate}
\item[(i)]
$f^{-1}(c)$ divides $\rr^2$ into two connected components, say $R_1$ and $R_2$ and
\item[(ii)]
there exists an $r$-neighbourhood $U$ of $x$ relatively to $\Im$ such that $U\cap f^{-1}(c)$ is a connected curve dividing $U$ into two components, say $U_1$ and $U_2$, such that $U_1\subset R_1$ and $U_2\subset R_2$.
\end{enumerate}
Not loosing generality, we can assume that $[p,q] \subset U$ so that $[p,q]\setminus\{x\}$ consists of two half-open arcs $[p,x) \subset U_1$ and $(x,q] \subset U_2$.
It follows that $x$ is an \myemph{isolated} local extreme of the restriction of $f|_{[p,q]}$, whence there exist $y\in[p,x)\subset R_1$ and $z\in(x,q] \subset R_2$ such that $f(y)=f(z)\not=f(c)$.
Thus $y, z \in f^{-1}(f(y)) \subset \rr^2\setminus f^{-1}(c) = R_1 \cup R_1$.
By (1) $f^{-1}(f(y))$ is connected, and so both $y$ and $z$ belong either to $R_1$ or to $R_2$.
This gives a contradiction, whence $x$ is not a local extreme of $f$.
\end{proof}

For $[c,d]\subset\rr$ denote $D_{c,d} = f^{-1}[c,d]$.
Then it follows from Lemma~\ref{lm:monot_cross_sect} and Theorem~\ref{th:Kaplan} that for each cross-section $[p,q]$ there exists a homeomorphism \[ \varphi: \rr \, \times \,[f(p), f(q)] \longrightarrow f^{-1}[f(p), f(q)] = D_{f(p),f(q)}\]
such that $f\circ\varphi(x,y) = y$ for all $(x,y)\in \rr \times [f(p), f(q)]$.
%

This also implies that the image $f(\rr^2)$ is an open and path connected subset of $\rr$, i.e. an open interval $(a,b)$, where $a$ and $b$ can be infinite.

Hence we can find a countable strictly increasing sequence $\{c_i\}_{i\in\mathbb{Z}} \subset \rr$ such that
$\lim\limits_{k\to-\infty}c_i = a$, $\lim\limits_{k\to+\infty}c_k = b$, and for each $k\in\mathbb{Z}$ a homeomorphism
\[ \varphi_{k}: \rr\, \times \,[c_{k};c_{k+1}] \longrightarrow f^{-1}[c_k,c_{k+1}] = D_{c_{k}, c_{k+1}}\]
satisfying $f \circ \varphi_{k}(x,y) = y$.   

%
%

Define a homeomorphism $\varphi: \rr \times (a,b) \to \rr^2$ as follows.
Set \[ \varphi(x,y) = \varphi_{0}(x,y), \qquad  (x,y)\in \rr\times [c_0,c_{1}].\]
Now if $\varphi$ is defined on $\rr \times [c_{k-1}, c_{k}]$ for some $k\geq1$, then extend it to $\rr\times [c_{k}, c_{k+1}]$ by
\[
\varphi(x,y) = \varphi_{k} (\varphi^{-1}_{k} \circ \varphi(x,c_k), y), 
\qquad (x,y) \in \rr\times[c_{k},c_{k+1}].
\]
Similarly, one can extend $\varphi$ to $\rr\times(a,c_0]$.
It easily follows that $\varphi$ is a homeomorphism satisfying $f\circ\varphi(x,y)=y$, $(x,y)\in\rr\times(a,b)$.
\end{proof}

\end{document}